\documentclass[12pt,reqno]{amsart}
\usepackage{latexsym,amsmath,amsfonts,amssymb,amsthm}
\textwidth=420pt \evensidemargin=0pt \oddsidemargin=0pt
\def\pmod #1{\ ({\rm{mod}}\ #1)}
\def\Z{\mathbb Z}

\def\C{\mathbb C}
\def\1{{\mathbf 1}}

\def\jacob #1#2{\left(\frac{#1}{#2}\right)}
\def\pmod #1{\ ({\rm{mod}}\ #1)}

\def\floor #1{\left\lfloor{#1}\right\rfloor}

\theoremstyle{plain}
\newtheorem{Thm}{Theorem}
\newtheorem{Lem}{Lemma}

\theoremstyle{definition}
\newtheorem*{Ack}{Acknowledgment}
\theoremstyle{remark}

\pagestyle{plain}

\begin{document}

\title{On the Atkin and Swinnerton-Dyer type congruences for some truncated hypergeometric ${}_1F_0$ series}
\author{Yong Zhang}
\email{yongzhang1982@163.com}
\address{Department of Mathematics and Physics, Nanjing Institute of Technology,
Nanjing 211167, People's Republic of China}
\author{Hao Pan}
\email{haopan79@zoho.com}
\address{School of Applied Mathematics, Nanjing University of Finance and Economics, Nanjing 210046, People's Republic of China}
\keywords{truncated hypergeometric series;  Atkin and Swinnerton-Dyer type congruence}
\subjclass[2010]{Primary 11B65; Secondary 05A10}
\thanks{}
\begin{abstract}
Let $p$ be an odd prime and let $n$ be a positive integer. For any positive integer $\alpha$ and $m\in\{1,2,3\}$, we have
\begin{align*} 
\sum_{k=0}^{p^{\alpha}n-1}\frac{(\frac12)_k}{k!}\cdot\frac{(-4)^k}{m^k}\equiv\bigg(\frac{m(m-4)}{p}\bigg)\sum_{k=0}^{p^{\alpha-1}n-1}\frac{(\frac12)_k}{k!}\cdot\frac{(-4)^k}{m^k}\pmod{p^{2\alpha}},
\end{align*}
where $(x)_k=x(x+1)\cdots(x+k-1)$ and $\jacob{\cdot}{\cdot}$ denotes the Legendre symbol. Also, when $m=4$,
\begin{align*} 
\sum_{k=0}^{p^{\alpha}n-1}(-1)^k\cdot\frac{(\frac12)_k}{k!}\equiv p\sum_{k=0}^{p^{\alpha-1}n-1}(-1)^k\cdot\frac{(\frac12)_k}{k!}\pmod{p^{2\alpha}}.
\end{align*}
\end{abstract}
\maketitle

\section{Introduction}
\setcounter{equation}{0}
\setcounter{Thm}{0}
\setcounter{Lem}{0}
\setcounter{Cor}{0}
\setcounter{Conj}{0}

In \cite{ASD71}, Aktin and Swinnerton-Dyer systematically investigated the arithmetic properties of the Fourier coefficients of noncongruence modular forms. They observed that if $\Gamma$ is a noncongruence subgroup of $SL_2(\Z)$ with a finite index and $k\geq 2$ is even, then
for some good primes $p$, there exists a basis $\{f_i\}_{1\leq i\leq d}$ of $S_k(\Gamma)$, where $d=\dim S_k(\Gamma)$, such that for each $1\leq i\leq d$ and $\alpha\geq 1$,
$$
a_{np^\alpha}(f_i)-\lambda_{p,i}\cdot a_{np^{\alpha-1}}(f_i)+p^{k-1}a_{np^{\alpha-2}}(f_i)\pmod{p^{(k-1)\alpha}},\qquad\forall n\geq1,
$$
where $\lambda_{p,i}$ is an algebraic integer with $|\lambda_{p,i}|\leq 2p^{\frac{k-1}{2}}$, $a_n(f)$ denotes the $n$-th coefficients in the Fourier  expansion of $f(z)$ and $a_x(f)=0$ if $x\not\in\Z$. Subsequently, the work of Aktin and Swinnerton-Dyer was greatly developed by Scholl in \cite{Sc85}.

Nowadays, for a sequence $\{a_n\}_{n\geq 0}$ of integers, the congruence of the form
$$
a_{np^{\alpha}}\equiv \lambda_p\cdot a_{np^{\alpha-1}}\pmod{p^{k\alpha}},\qquad\forall n\geq 1,
$$
is also often called {\it Atkin and Swinnerton-Dyer type congruence}, where $p$ is a prime and $k,r\geq 1$. The Atkin and Swinnerton-Dyer type congruences have be established for many combinatorial sequences. For examples, Beukers \cite{Be85} proved that the Ap\'ery number
$$
A_n:=\sum_{k=0}^n\binom{n}{k}^2\binom{n+k}{k}^2,
$$
which was used to prove the irrationality of $\zeta(3)=\sum_{n\geq 1}n^{-3}$ by Ap\'ery, satisfies the Atkin and Swinnerton-Dyer type congruence
\begin{equation}
A_{np^\alpha-1}\equiv A_{np^{\alpha-1}-1}\pmod{p^{3\alpha}},\qquad\forall n\geq 1,
\end{equation} 
where $p\geq 5$ is prime and $\alpha\geq 1$. Another example due to Coster and Hamme is concerning the Legendre polynomial
$$
P_n(z):=\sum_{k=0}^n\binom{n}{k}\binom{-n-1}{k}\cdot\bigg(\frac{1-z}{2}\bigg)^k.
$$
Coster and Hamme \cite{CoHa91} proved that if the elliptic curve $y^2=x(x^2+Ax+B)$ has the complex multiplication, then the sequence $\{P_n(z)\}_{n\geq 0}$, where $z=(1-A/\sqrt{A^2-4B})/2$, obeys some Atkin and Swinnerton-Dyer type congruences.
In \cite{LiLo14}, Li and Long gave a nice survey on the Atkin and Swinnerton-Dyer congruences. For more related results, the reader may refer to \cite{St14,OSS16}. In particular, recently Sun \cite{ZWS2} proposed many conjectured Atkin and Swinnerton-Dyer type congruences.

On the other hand, define the truncated hypergeometric function
$$
{}_{m+1}F_m\bigg[\begin{matrix}
a_0&a_1&\ldots&a_m\\
&b_1&\ldots&b_m
\end{matrix}\bigg|\,z\bigg]_n:=\sum_{k=0}^{n}\frac{(a_0)_k(a_1)_k\cdots(a_m)_k}{(b_1)_k\cdots(b_m)_k}\cdot\frac{z^k}{k!},
$$
where $$(a)_k=\begin{cases}a(a+1)\cdots(a+k-1),&\text{if }k\geq1,\\ 1,&\text{if }k=0.\end{cases}$$
Clearly the truncated hypergeometric function is just a finite analogue of the original  hypergeometric function. Recently the arithmetic properties of the truncated hypergeometric functions are widely studied. In this paper, we shall consider the simplest truncated hypergeometric function
$$
{}_{1}F_0\bigg[\begin{matrix}
\frac12\\
{}
\end{matrix}\bigg|\,z\bigg]_n=\sum_{k=0}^{n}\frac{(\frac12)_k}{k!}\cdot z^k.
$$

For each non-zero integer $m$, as a consequence of , for each odd prime $p$ we have
\begin{equation}\label{1F0124m}
{}_{1}F_0\bigg[\begin{matrix}
\frac12\\
{}
\end{matrix}\bigg|\,-\frac4m\bigg]_{p-1}\equiv\jacob{m(m-4)}{p}\pmod{p},
\end{equation}
where $\jacob{\cdot}{\cdot}$ denotes the Legendre symbol. In fact, (\ref{1F0124m}) also easily follows from that
\begin{align*}
{}_{1}F_0\bigg[\begin{matrix}
\frac12\\
{}
\end{matrix}\bigg|\,-\frac4m\bigg]_{p-1}\equiv
&{}_{1}F_0\bigg[\begin{matrix}
\frac{1-p}2\\
{}
\end{matrix}\bigg|\,-\frac4m\bigg]_{p-1}=\sum_{k=0}^{\frac{p-1}{2}}\binom{\frac{p-1}{2}}{k}\cdot \bigg(-\frac 4m\bigg)^k\\
=&\bigg(1-\frac4m\bigg)^{\frac{p-1}{2}}\equiv\jacob{m(m-4)}{p}\pmod{p}. 
\end{align*}
In \cite{ST1}, Sun  extended (\ref{1F0124m}) to
\begin{equation}\label{1F0124mp2}
{}_{1}F_0\bigg[\begin{matrix}
\frac12\\
{}
\end{matrix}\bigg|\,-\frac4m\bigg]_{p-1}\equiv\jacob{m(m-4)}{p}+u_{p-\jacob{m(m-4)}{p}}(m-2,1)\pmod{p^2},
\end{equation}
where the Lucas sequence $\{u_n(A,B)\}_{n\geq 0}$ is given by
$$
u_0(A,B)=0,\quad u_1(A,B)=1,\quad u_n(A,B)=Au_{n-1}(A,B)-Bu_{n-2}(A,B),\quad\forall n\geq 2.
$$
Recently, Sun \cite{ZWS2} also obtained an Atkin and Swinnerton-Dyer type generalization of (\ref{1F0124mp2}):
\begin{align}\label{1F0124mpa}
&{}_{1}F_0\bigg[\begin{matrix}
\frac12\\
{}
\end{matrix}\bigg|\,-\frac4m\bigg]_{np^\alpha-1}-\jacob{m(m-4)}{p}{}_{1}F_0\bigg[\begin{matrix}
\frac12\\
{}
\end{matrix}\bigg|\,-\frac4m\bigg]_{np^{\alpha-1}-1}\notag\\
\equiv&\frac{np^{\alpha-1}}{m^{np^{\alpha-1}-1}}\cdot\binom{2np^{\alpha-1}-1}{np^{\alpha-1}-1}\cdot u_{p-\jacob{m(m-4)}{p}}(m-2,1)\pmod{p^{\alpha+1}}.
\end{align}
Clearly (\ref{1F0124mp2}) easily follows from (\ref{1F0124mpa}) by substituting $\alpha=1$ and $n=1$.

It is natural to ask whether in (\ref{1F0124mpa}) modulo $p^{\alpha+1}$ can be replaced by $p^{2\alpha}$. Unfortunately, seemingly it is not easy to get such an extension for general $m$. However, in this paper, for $m=1,2,3$, we shall prove that
\begin{Thm}
Let $p$ be an odd prime and let $n$ be a positive integer. If $m\in\{1,2,3\}$, then for any positive integer $\alpha$, 
\begin{align} \label{jiankun11}
{}_{1}F_0\bigg[\begin{matrix}
\frac12\\
{}
\end{matrix}\bigg|\,-\frac4m\bigg]_{np^\alpha-1}\equiv\jacob{m(m-4)}{p}{}_{1}F_0\bigg[\begin{matrix}
\frac12\\
{}
\end{matrix}\bigg|\,-\frac4m\bigg]_{np^{\alpha-1}-1}\pmod{p^{2\alpha}}.\end{align}
Furthermore, when $m=4$, 
\begin{align} 
{}_{1}F_0\bigg[\begin{matrix}
\frac12\\
{}
\end{matrix}\bigg|\,-1\bigg]_{np^\alpha-1}\equiv p\cdot {}_{1}F_0\bigg[\begin{matrix}
\frac12\\
{}
\end{matrix}\bigg|\,-1\bigg]_{np^{\alpha-1}-1}
\pmod{p^{2\alpha}}\label{jiankun12}.
\end{align}
\end{Thm}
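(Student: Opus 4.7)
The plan is to argue by induction on $\alpha$, taking Sun's congruence \eqref{1F0124mpa} as a starting point. Set $S(N):={}_{1}F_{0}[\tfrac{1}{2};-4/m]_{N-1}$ and let $\epsilon:=\jacob{m(m-4)}{p}$ for $m\in\{1,2,3\}$, respectively $\epsilon:=p$ for $m=4$. The essential observation is that for these four values of $m$ the Lucas factor $u_{p-\jacob{m(m-4)}{p}}(m-2,1)$ appearing on the right side of \eqref{1F0124mpa} is degenerate: the characteristic polynomial $x^{2}-(m-2)x+1$ has roots that are primitive $3$rd, $4$th, $6$th roots of unity for $m=1,2,3$, making the Lucas sequence periodic with values in $\{0,\pm 1\}$ and vanishing at the relevant index; for $m=4$ the root coalesces to $1$, $u_{n}=n$, and $u_{p}=p$. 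In every case, \eqref{1F0124mpa} already yields the target congruence modulo $p^{\alpha+1}$; the task is to upgrade the modulus to $p^{2\alpha}$.

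The engine will be a block decomposition of the index $k\in\{0,\ldots,np^{\alpha}-1\}$ as $k=pk_{1}+r$ with $0\le k_{1}<np^{\alpha-1}$ and $0\le r<p$. Using $(1/2)_{k}/k!=\binom{2k}{k}/4^{k}$, this regroups $S(np^{\alpha})$ into $np^{\alpha-1}$ inner blocks indexed by $k_{1}$. I would combine three ingredients: (i) a Jacobsthal--Kazandzidis-type refinement for $\binom{2(pk_{1}+r)}{pk_{1}+r}$ relating it to $\binom{2k_{1}}{k_{1}}\binom{2r}{r}$ with higher-order corrections (and the standard boundary treatment when $r\ge(p+1)/2$, where an extra factor of $p$ appears); (ii) the Fermat-type lift $(-1/m)^{pk_{1}+r}\equiv(-1/m)^{k_{1}+r}\pmod{p}$ and its higher-order analogues; and (iii) the mod-$p^{2}$ sharpening of the introductory identity,
\[
\sum_{r=0}^{p-1}\binom{2r}{r}\Bigl(-\tfrac{1}{m}\Bigr)^{r}\equiv\epsilon\pmod{p^{2}},
\]
which is precisely \eqref{1F0124mp2} combined with the degenerate Lucas value noted above (and the analogue $\equiv p\pmod{p^{2}}$ for $m=4$). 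Applied to each block, the leading contribution evaluates to $\epsilon\binom{2k_{1}}{k_{1}}(-1/m)^{k_{1}}$; summing over $k_{1}$ reconstructs $\epsilon\,S(np^{\alpha-1})$ up to a controlled error.

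The induction closes once the accumulated error can be rewritten, to sufficient precision, as a $p^{2}$-sized multiple of $S(np^{\alpha-1})-\epsilon\,S(np^{\alpha-2})$, which by the induction hypothesis lies in $p^{2(\alpha-1)}\Z_{p}$; the product then lies in $p^{2\alpha}\Z_{p}$, as required. The $m=4$ case follows the same template with $\epsilon=p$ substituted throughout.

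The main obstacle I anticipate is the precision of the block-level congruence rather than the inductive mechanism. Both the Jacobsthal--Kazandzidis step and the mod-$p^{2}$ evaluation of the inner $r$-sum must be performed with enough quantitative control that the leftover error admits the advertised factorization through $S(np^{\alpha-1})-\epsilon\,S(np^{\alpha-2})$; a priori the leftover sits only in $p^{\alpha+1}\Z_{p}$, and matching it to an iterate of the induction requires a careful expansion of $(1/2)_{pk_{1}+r}/(pk_{1}+r)!$. The special structure of $m\in\{1,2,3,4\}$---the vanishing or linear Lucas values---is precisely what averts a non-cancelling $O(p^{\alpha+1})$ residue and allows the modulus to double in strength from $p^{\alpha+1}$ to $p^{2\alpha}$; for general $m$ the same method stalls at \eqref{1F0124mpa}'s exponent because the inner $r$-sum picks up a nontrivial Lucas contribution at order $p$.
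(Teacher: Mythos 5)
Your observation that the Lucas factor $u_{p-\jacob{m(m-4)}{p}}(m-2,1)$ degenerates for $m\in\{1,2,3,4\}$ (the roots of $x^2-(m-2)x+1$ are third, fourth, sixth roots of unity for $m=1,2,3$, and coalesce at $1$ for $m=4$) is correct and is indeed why these four values of $m$ are tractable; it gives \eqref{jiankun11} modulo $p^{\alpha+1}$ directly from \eqref{1F0124mpa}. However, what you have written is a plan whose decisive step is missing. The entire content of the theorem is the upgrade from modulus $p^{\alpha+1}$ to $p^{2\alpha}$, and your mechanism for achieving it --- that after the block decomposition $k=pk_1+r$ the accumulated error ``can be rewritten, to sufficient precision, as a $p^2$-sized multiple of $S(np^{\alpha-1})-\epsilon S(np^{\alpha-2})$'' --- is asserted, not proved. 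A priori that error lies only in $p^{\alpha+1}\Z_p$ (as you concede), and nothing in the proposal shows it carries the specific telescoping form $C_p\bigl(S(np^{\alpha-1})-\epsilon S(np^{\alpha-2})\bigr)+O(p^{2\alpha})$ with $C_p\in p^2\Z_p$; establishing such a three-term relation is essentially equivalent in difficulty to the theorem itself. There are also concrete obstructions to the ingredients you name: the Jacobsthal--Kazandzidis congruence requires $p\ge 5$ while the theorem covers all odd primes; for $1\le r\le p-1$ there is no comparably clean lift of $\binom{2(pk_1+r)}{pk_1+r}$ beyond Lucas' theorem (mod $p$ only, or mod $p^2$ with Fermat-quotient corrections that would then have to be tracked through $\alpha$ iterations); and the inner-sum evaluation \eqref{1F0124mp2} is only a mod $p^2$ statement, so each block contributes an $O(p^2)$ defect whose cancellation across blocks is exactly what remains to be shown.

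For comparison, the paper's proof avoids decomposing the central binomial coefficients altogether: it first converts the partial sum via the Sun--Tauraso identity $m^{n-1}\sum_{k<n}\binom{2k}{k}m^{-k}=\sum_{k<n}\binom{2n}{k}u_{n-k}(m-2,1)$, then controls $\binom{p^\alpha n}{k}$ modulo $p^{2\alpha}$ directly (Lemma 2.1), and proves the key congruence of Lemma 2.4 for the sums $\sum_{\lfloor k/p^s\rfloor=l}'(-1)^k u_{p^\alpha n-k}(m-2,1)/k$ by an induction on $s$, combined with the Beukers-style induction of Lemma 2.5. If you wish to salvage your route, the asserted factorization of the error term must be proved; as it stands the argument does not close.
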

We mention that the special case $m=\alpha=1$ of (\ref{jiankun11}) was also conjectured by  Apagodu and  Zeilberger\cite{AZ} and proved by Liu \cite{L}.

Let us give an explanation on (\ref{jiankun11}) from the viewpoint of convergent series. We know that
\begin{equation}\label{12kzk}
\sum_{k=0}^{n}\frac{(\frac12)_k}{k!}\cdot z^k=\sqrt{1-z}
\end{equation}
for any $z\in\C$ with $|z|\leq 1$. However, since $(\frac12)_k/k!$ is not divisible by 
$p$ for infinitely many $k$, the series (\ref{12kzk}) can't be convergent in the sense of $p$-adic norm. Let $$
{\mathcal S}_n=\sum_{k=0}^{n-1}\frac{(\frac12)_k}{k!}\cdot\frac{(-4)^k}{m^k}.
$$
Then (\ref{jiankun11}) says that for each $n\geq 1$, both $\{{\mathcal S}_{np^{2\alpha}}\}_{\alpha\geq 0}$ and $\{{\mathcal S}_{np^{2\alpha-1}}\}_{\alpha\geq 1}$ are rapidly convergent subsequences of $\{{\mathcal S}_{m}\}_{m\geq0}$ in the sense of $p$-adic norm.

Throughout this paper,
we will show several lemmas  in Sections 2. Theorem 1.1 will be proved in Sections 3.

\section{Some Lemmas}
\setcounter{equation}{0}
\setcounter{Thm}{0}
\setcounter{Lem}{0}
\setcounter{Cor}{0}
\setcounter{Conj}{0}
\begin{Lem}
\label{lzhang}
For any nonnegative integer $k,n$ and $\alpha$, we have

\noindent{\rm (i)}If $p\mid k$, then
\begin{equation}\label{y7}
\binom{p^\alpha n}{k}\equiv \binom{p^{\alpha-1}n}{k/p}\pmod{p^{2\alpha}}.
\end{equation}
\noindent{\rm (ii)}If $p \nmid k$, then
\begin{equation}\label{y81}
\binom{p^\alpha n}{k}\equiv {\frac{p^\alpha n}{k}\binom{p^{\alpha-1}n-1}{\floor {\frac{k-1}{p}}}(-1)^{k-1-\floor {\frac{k-1}{p}}}}\pmod{p^{2\alpha}}.
\end{equation}
\noindent{\rm (iii)}
\begin{equation}\label{y71}
\binom{p^\alpha n-1}{k}\equiv \binom{p^{\alpha-1}n-1}{\floor {k/p}}(-1)^{k-\floor {k/p}}\pmod{p^{\alpha}},
\end{equation}
\end{Lem}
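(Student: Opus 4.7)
The plan is to treat all three parts through a single factorization device: write the binomial coefficient as $\tfrac{1}{k!}\prod_{j=0}^{k-1}(p^\alpha n-j)$ (or analogously for $p^\alpha n-1$), then split both the product and $k!$ according to whether the index is divisible by $p$. The ``$p$-divisible'' contributions in numerator and denominator combine, after pulling out a common power of $p$ and a factorial, to yield the binomial coefficient at $p^{\alpha-1}n$; the remaining factors have the form $1-\frac{p^\alpha n}{j}$ with $p\nmid j$.

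Carrying this out for (iii): with $k=pb+r$ and $0\le r<p$, one finds
\[
\binom{p^{\alpha}n-1}{k}=(-1)^{k-b}\binom{p^{\alpha-1}n-1}{b}\prod_{\substack{1\le j\le k\\ p\nmid j}}\left(1-\frac{p^{\alpha}n}{j}\right).
\]
Since every $j$ in the product is a $p$-adic unit, each factor is $\equiv 1\pmod{p^{\alpha}}$, and (iii) follows at once. Part (ii) is then a short corollary: from $\binom{p^\alpha n}{k}=\frac{p^\alpha n}{k}\binom{p^\alpha n-1}{k-1}$ together with $p\nmid k$, the prefactor has $p$-adic valuation exactly $\alpha$, so multiplying the mod-$p^{\alpha}$ congruence of (iii) (applied to $\binom{p^\alpha n-1}{k-1}$) by this prefactor upgrades the modulus to $p^{2\alpha}$.

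Part (i) is where the main work lies, because (iii) only delivers the analogous statement mod $p^{\alpha}$. Setting $k=p\ell$, the same factorization produces
\[
\binom{p^\alpha n}{p\ell}=\binom{p^{\alpha-1}n}{\ell}\prod_{\substack{1\le j\le p\ell-1\\ p\nmid j}}\left(1-\frac{p^\alpha n}{j}\right)\equiv\binom{p^{\alpha-1}n}{\ell}\bigl(1-p^\alpha n\, S\bigr)\pmod{p^{2\alpha}},
\]
where $S=\sum_{1\le j\le p\ell-1,\,p\nmid j}1/j$, the higher-order terms of the product expansion being absorbed because they carry at least $(p^\alpha n)^2$. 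The task therefore reduces to proving $\binom{p^{\alpha-1}n}{\ell}\, S\equiv 0\pmod{p^\alpha}$, and the decisive observation is the fixed-point-free involution $j\leftrightarrow p\ell-j$ on the index set (fixed-point-free because $p$ is odd, so any candidate $j=p\ell/2$ is itself divisible by $p$). Pairing gives $\frac{1}{j}+\frac{1}{p\ell-j}=\frac{p\ell}{j(p\ell-j)}$, so $S=p\ell\cdot S'$ with $S'$ a $p$-adic integer; combined with the identity $\ell\binom{p^{\alpha-1}n}{\ell}=p^{\alpha-1}n\binom{p^{\alpha-1}n-1}{\ell-1}$, this yields $\binom{p^{\alpha-1}n}{\ell}\, S=p^{\alpha} n\binom{p^{\alpha-1}n-1}{\ell-1}S'\equiv 0\pmod{p^{\alpha}}$, closing (i).
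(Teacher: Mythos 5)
Your argument is correct, and it is worth noting that the paper itself supplies no proof of this lemma at all: part (iii) is attributed to Beukers (Lemma 2(i) of \cite{FB}), while parts (i) and (ii) are stated without justification. Your single factorization device --- splitting $\prod_j\frac{p^\alpha n-j}{j}$ according to $p\mid j$ versus $p\nmid j$ --- is essentially the same mechanism Beukers uses for (iii), and your derivation of (ii) from (iii) via $\binom{p^\alpha n}{k}=\frac{p^\alpha n}{k}\binom{p^\alpha n-1}{k-1}$ is the natural one (the prefactor has valuation \emph{at least} $\alpha$, not necessarily exactly $\alpha$ when $p\mid n$, but that only helps). The genuinely nontrivial content is your part (i), where the naive estimate gives only $p^\alpha$ and you recover the full $p^{2\alpha}$ by the pairing $j\leftrightarrow p\ell-j$, which shows $S=\sum_{p\nmid j}1/j$ is divisible by $p\ell$, combined with $\ell\binom{p^{\alpha-1}n}{\ell}=p^{\alpha-1}n\binom{p^{\alpha-1}n-1}{\ell-1}$; this is a correct Wolstenholme--Jacobsthal-type argument and it uses $p$ odd exactly where it must (both for the sign $(-1)^{(p-1)\ell}=1$ in front of the unit product, which you should state explicitly, and for the involution having no fixed point). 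One small wording repair: the involution is fixed-point-free because $j=p\ell/2$ is either not an integer ($\ell$ odd) or is divisible by $p$ ($\ell$ even), not solely for the reason you give. With that cosmetic fix your proof fills a genuine gap in the paper.
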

here (\ref{y71}) is the Lemma2(i) in F. Beukers' paper\cite{FB}. 
The following curious identity is due to Sun and Taurso:
\begin{Lem}[{\cite[(2.1)]{ST2}}]
\label{y5}
For any nonzero integer $m$ and positive integer $n$, we have

$$m^{n-1}\sum_{k=0}^{n-1}\frac{1}{m^k}\binom{2k}{k}=\sum_{k=0}^{n-1}\binom{2n}{k}u_{n-k}(m-2,1).$$

\end{Lem}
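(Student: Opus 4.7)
The plan is to prove this algebraic identity by showing that the two sides, regarded as sequences in $n$, have the same ordinary generating function in a formal variable $z$. Write the left-hand side as $a_n=\sum_{k=0}^{n-1}\binom{2k}{k}m^{n-1-k}$, and after reindexing $j=n-k$ write the right-hand side as $b_n=\sum_{j=1}^{n}\binom{2n}{n-j}u_j(m-2,1)$.

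For the left side, swapping the order of summation and using $\sum_{k\geq 0}\binom{2k}{k}z^k=1/\sqrt{1-4z}$ yields
\begin{align*}
\sum_{n\geq 1}a_n z^n=\sum_{k\geq 0}\binom{2k}{k}\sum_{n\geq k+1}m^{n-1-k}z^n=\frac{z}{1-mz}\sum_{k\geq 0}\binom{2k}{k}z^k=\frac{z}{(1-mz)\sqrt{1-4z}}.
\end{align*}
For the right side, I would use the Catalan generating function $C(z)=(1-\sqrt{1-4z})/(2z)$, which satisfies $zC(z)^2=C(z)-1$, together with the classical identity $\sum_{n\geq 0}\binom{2n+k}{n}z^n=C(z)^k/\sqrt{1-4z}$ (provable by Lagrange inversion). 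A shift $n\mapsto n+j$ then gives
\begin{align*}
\sum_{n\geq j}\binom{2n}{n-j}z^n=\frac{z^j C(z)^{2j}}{\sqrt{1-4z}}=\frac{(C(z)-1)^j}{\sqrt{1-4z}}.
\end{align*}
Interchanging summation in $\sum_{n\geq 1} b_n z^n$ and invoking the Lucas generating function $\sum_{j\geq 1}u_j(m-2,1)w^j=w/(1-(m-2)w+w^2)$ with $w=C(z)-1$ produces
\begin{align*}
\sum_{n\geq 1}b_n z^n=\frac{1}{\sqrt{1-4z}}\cdot\frac{C(z)-1}{1-(m-2)(C(z)-1)+(C(z)-1)^2}.
\end{align*}

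The decisive calculation is the simplification of this denominator. Direct expansion gives $C(z)^2-mC(z)+m$, and substituting $C(z)^2=(C(z)-1)/z$ converts it into $(C(z)-1)(1-mz)/z$. Cancelling the common factor $C(z)-1$ collapses the whole expression to $z/((1-mz)\sqrt{1-4z})$, matching the left-side generating function; equating coefficients of $z^n$ then proves the identity. I expect the main obstacle to be this last algebraic manipulation, together with the correct invocation of the Catalan-type generating function for $\binom{2n}{n-j}$. An alternative route would be induction on $n$ using the evident recurrence $a_{n+1}=ma_n+\binom{2n}{n}$ and the Lucas three-term relation on the $b_n$ side, but verifying the same recurrence for $b_n$ via Pascal's rule and careful boundary bookkeeping is substantially more fiddly than the generating-function computation sketched above.
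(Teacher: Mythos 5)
Your generating-function argument is correct, and I checked each step: the left-hand series does sum to $z/((1-mz)\sqrt{1-4z})$; the shift identity $\sum_{n\geq j}\binom{2n}{n-j}z^n=(C(z)-1)^j/\sqrt{1-4z}$ follows from the standard $\sum_{n\geq0}\binom{2n+k}{n}z^n=C(z)^k/\sqrt{1-4z}$ together with $zC(z)^2=C(z)-1$; the Lucas generating function with $w=C(z)-1$ is legitimate as a formal-power-series substitution since $C(z)-1$ has order $1$ in $z$; and the denominator $1-(m-2)(C-1)+(C-1)^2$ does expand to $C^2-mC+m=(C-1)(1-mz)/z$, collapsing the right-hand series to the same expression. (A spot check at $n=1,2,3$ confirms the identity, e.g.\ $n=3$ gives $m^2+2m+6$ on both sides.) The paper itself offers no proof of this lemma: it is quoted verbatim as identity (2.1) of Sun and Tauraso \cite{ST2}, where it is established by different means. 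So your contribution is a self-contained derivation rather than an alternative to an argument in this paper; what it buys is independence from the external reference and a transparent explanation of why the Lucas parameters $(m-2,1)$ arise (they are exactly what makes the characteristic polynomial evaluated at $C(z)-1$ factor through $1-mz$). One small point worth making explicit if you write this up: both sides are polynomials in $m$ (since $k\leq n-1$ on the left), so the identity proved formally for an indeterminate $m$ specializes to every nonzero integer $m$ without convergence concerns.
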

\begin{Lem} 
Let $p>2$ be a prime. For any nonnegative integer $\alpha$ , $s$ with $\alpha\geq s$, then we have
$$\frac{(m^{p^\alpha-p^{\alpha-1}}-1)}{2p^\alpha}\equiv\frac{(m^{p^s-p^{s-1}}-1)}{2p^s}\pmod{p^{s}}.$$
\end{Lem}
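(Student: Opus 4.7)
I assume $s\ge 1$ and $p\nmid m$; otherwise the displayed expressions are not in $\Z_p$. Since $p$ is odd, $2$ is a $p$-adic unit, so it suffices to prove
$$\frac{m^{p^\alpha-p^{\alpha-1}}-1}{p^\alpha}\equiv\frac{m^{p^s-p^{s-1}}-1}{p^s}\pmod{p^s}.$$
The plan is to iterate the one-step statement
$$\frac{m^{p^t-p^{t-1}}-1}{p^t}\equiv\frac{m^{p^{t-1}-p^{t-2}}-1}{p^{t-1}}\pmod{p^{t-1}}\qquad(t\ge 2)$$
downwards from $t=\alpha$ to $t=s+1$; the smallest modulus reached is $p^s$, giving the claim, with the base case $\alpha=s$ trivial.

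To prove the one-step statement, set $u:=m^{p^{t-1}-p^{t-2}}$. By Euler's theorem in $(\Z/p^{t-1}\Z)^{\times}$, $u\equiv 1\pmod{p^{t-1}}$, so I write $u=1+p^{t-1}c$ with $c:=(m^{p^{t-1}-p^{t-2}}-1)/p^{t-1}\in\Z_p$. Since $m^{p^t-p^{t-1}}=u^p$, the binomial theorem gives
$$u^p=1+p^t c+\sum_{k=2}^{p}\binom{p}{k}(p^{t-1}c)^k.$$
I then verify that every term in the sum has $p$-adic valuation at least $2t-1$: for $2\le k\le p-1$ the valuation is at least $1+(t-1)k\ge 2t-1$ since $p\mid\binom{p}{k}$, while for $k=p$ the valuation is at least $(t-1)p\ge 3(t-1)\ge 2t-1$ using $p\ge 3$ and $t\ge 2$. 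Hence $u^p\equiv 1+p^tc\pmod{p^{2t-1}}$, and dividing by $p^t$ is exactly the one-step congruence.

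The only real obstacle is the valuation bookkeeping on the binomial tail: every $k\ge 2$ term of $(1+p^{t-1}c)^p$ must contribute at least $p^{2t-1}$, and this is precisely where $p>2$ and $t\ge 2$ are both used. The rest is a routine combination of Euler's theorem and a short reverse induction.
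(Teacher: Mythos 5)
Your proof is correct. It takes a mildly but genuinely different route from the paper's: the paper uses the identity $m^{p^\alpha-p^{\alpha-1}}=\bigl(m^{p^s-p^{s-1}}\bigr)^{p^{\alpha-s}}$ and expands the single binomial $\bigl(1+(m^{p^s-p^{s-1}}-1)\bigr)^{p^{\alpha-s}}$ in one shot, peeling off the $k=1$ term to produce $\frac{m^{p^s-p^{s-1}}-1}{2p^s}$ and then asserting, after splitting the tail into the parts with $p\nmid k$ and $p\mid k$, that the remainder vanishes modulo $p^s$; you instead telescope the descent from $\alpha$ to $s$ into $\alpha-s$ one-step congruences, each obtained by expanding $(1+p^{t-1}c)^p$. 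The underlying engine --- Euler's theorem plus $p$-adic valuation bookkeeping on a binomial tail --- is the same in both arguments, but your decomposition buys a much cleaner tail estimate: you only ever need $v_p\binom{p}{k}=1$ for $2\le k\le p-1$ together with the $k=p$ term, so the bound $2t-1$ and the precise role of $p\ge 3$ and $t\ge 2$ are fully explicit, whereas the paper's one-shot expansion must control $\binom{p^{\alpha-s}}{k}$ for all $k\le p^{\alpha-s}$ and leaves that verification essentially unwritten. Your added hypotheses $s\ge 1$ and $p\nmid m$ are exactly the implicit assumptions under which the two fractions are $p$-adic integers; the paper states neither, so making them explicit is an improvement rather than a restriction.
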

\begin{proof}
\begin{align*}
&\frac{(m^{p^\alpha-p^{\alpha-1}}-1)}{2p^\alpha}=\frac{\sum_{k=1}^{p^{\alpha-s}}\binom{p^{\alpha-s}}{k}(m^{p^s-p^{s-1}}-1)^k}{2p^\alpha}
=\frac{(m^{p^s-p^{s-1}}-1)}{2p^s}\\
+&\frac{(m^{p^s-p^{s-1}}-1)}{2p^\alpha}\bigg(\sum_{k=2}^{p^{\alpha-s}}{}^{'}\binom{p^{\alpha-s}}{k}(m^{p^s-p^{s-1}}-1)^{k-1}+\sum_{k=1}^{p^{\alpha-s-1}}\binom{p^{\alpha-s}}{pk}(m^{p^s-p^{s-1}}-1)^{pk-1}\bigg)\\
\equiv&\frac{(m^{p^s-p^{s-1}}-1)}{2p^s}\pmod{p^{s}}.
\end{align*}
\end{proof}
\begin{Lem} 
Let $p>2$ be a prime. For any nonnegative integer $n$, $l$, $\alpha$ , $s$ and $\alpha\geq s$.  If $m=1, 2, 3,$ then we have

\label{2zhang}
\begin{align}
\label{jiankun5}
&\sum_{\floor{k/p^s}=l}{}^{'}\frac{(-1)^{k}u_{p^{\alpha}n-k}(m-2,1)}{k}\equiv\\
&\bigg(\frac{m(m-4)}{p}\bigg)^s\frac{(-m^{p^\alpha-p^{\alpha-1}}+1)(-1)^l}{2p^\alpha}(u_{p^{\alpha-s}n-l}(m-2,1)+u_{p^{\alpha-s}n-l-1}(m-2,1))\pmod{p^{s}}\notag,
\end{align}
here $\sum_{\floor {k/p^s}=l}{}^{'}$ denotes the sum of $k$ with  $p \nmid k$.
\end{Lem}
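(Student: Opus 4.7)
The plan is to reduce the sum over $k$ to auxiliary sums independent of $n$, $\alpha$, $l$, and then to evaluate those using the root-of-unity structure special to $m\in\{1,2,3\}$.

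First, set $M:=p^{\alpha-s}n-l$ and substitute $k=lp^s+j$, so $j$ runs over the residues in $\{1,\dots,p^s-1\}$ coprime to $p$. Since $p^s$ is odd, $(-1)^{lp^s+j}=(-1)^l(-1)^j$, and $1/(lp^s+j)\equiv 1/j\pmod{p^s}$ because $j$ is a $p$-adic unit. The left-hand side reduces modulo $p^s$ to
$$(-1)^l\sum_{\substack{1\le j<p^s\\ p\nmid j}}\frac{(-1)^j u_{p^sM-j}(m-2,1)}{j}.$$
The Lucas addition identity $u_{p^sM-j}=u_{p^sM-1}u_j-u_{p^sM}u_{j-1}$ decouples the dependence on $j$, so the problem reduces, modulo $p^s$, to evaluating
$$S_1:=\sum_{\substack{1\le j<p^s\\p\nmid j}}\frac{(-1)^j u_j(m-2,1)}{j},\qquad S_2:=\sum_{\substack{1\le j<p^s\\p\nmid j}}\frac{(-1)^j u_{j-1}(m-2,1)}{j}.$$

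The key feature for $m\in\{1,2,3\}$ is that the roots $\gamma,\delta$ of $x^2-(m-2)x+1=0$ are primitive roots of unity of orders $3$, $4$, $6$ respectively. By the compatibility between $\chi:=\jacob{m(m-4)}{p}$ and these small orders, the identity $\gamma^{p-\chi}=1$ holds \emph{exactly}; consequently $\gamma^{p^s}=\gamma^{\chi^s}$, $u_{p^sM}=\chi^s u_M$, and $u_{p^sM-1}$ equals $u_{M-1}$ or $-u_{M+1}$ according as $\chi^s=1$ or $\chi^s=-1$. Expanding $u_j=(\gamma^j-\delta^j)/(\gamma-\delta)$ expresses $S_1$ and $S_2$ in terms of
$$\Sigma_s(y):=\sum_{\substack{1\le j<p^s\\p\nmid j}}\frac{y^j}{j}$$
evaluated at $y=-\gamma$ and $y=-\delta$.

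The heart of the proof is the auxiliary congruence
$$\Sigma_s(-\gamma)\equiv C_s\,(1+\gamma^{\chi^s})\pmod{p^s},\qquad C_s:=\frac{1-m^{p^s-p^{s-1}}}{2p^s},$$
together with its $\delta$-analogue. I would establish this by induction on $s$. The base case $s=1$ follows by setting $y=-\gamma$ in the identity $(1-y)^p\equiv 1-y^p-p\Sigma_1(y)\pmod{p^2}$ (a consequence of Lemma~2.1(ii)) and using $\gamma^p=\gamma^\chi$ together with $(1+\gamma)(1+\delta)=m$ to express $(1+\gamma)^p$ through the Fermat quotient of $m$. The induction step splits $\Sigma_s(y)$ into the residual range $p^{s-1}\le j<p^s$ plus a term of the form $(1/p)\Sigma_{s-1}(y^p)$, and invokes Lemma~2.3 to align $C_s$ with $C_{s-1}$. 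Once the auxiliary congruence is in hand, substituting back into $S_1$ and $S_2$ and using the Lucas recurrence $u_{M+1}=(m-2)u_M-u_{M-1}$ to collapse the two cases $\chi^s=\pm1$ into the common expression $\chi^s C_s(u_M+u_{M-1})$ yields the right-hand side of the lemma, after one final application of Lemma~2.3 to replace $C_s$ with $C=(1-m^{p^\alpha-p^{\alpha-1}})/(2p^\alpha)$.

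The principal obstacle is the induction step for the $\Sigma_s$ estimate: although the base case is amenable to a direct Frobenius-type manipulation, propagating the congruence to modulus $p^s$ requires careful bookkeeping of the interaction between the truncated $p$-adic logarithm $\Sigma_s$ and the higher Fermat-quotient expansion of $m^{p^s-p^{s-1}}$, for which Lemmas~2.1 and~2.3 supply just enough precision.
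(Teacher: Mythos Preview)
Your reduction is correct and your overall strategy is sound, but it is a genuinely different route from the paper's.  The paper also first reduces arbitrary $l$ to $l=0$, but from that point on it proceeds \emph{combinatorially}: it rewrites the primed sum as $\frac{1}{p^s}\sum'\binom{p^s}{k}u_{p^\alpha n-k}$ via Lemma~2.1(ii), peels off the $p\mid k$ terms with Lemma~2.1(i), and runs a rather intricate induction on~$s$ in which Lemma~2.1(iii) is used to collapse ranges of~$k$ step by step, finally reducing everything to the explicit identity~(yps11), which is then checked separately for $m=1,2,3$ using closed forms for the $u_k$.  By contrast, you use the addition formula $u_{p^sM-j}=u_{p^sM-1}u_j-u_{p^sM}u_{j-1}$ to decouple~$M$ from~$j$ at the outset and then work in $\Z_p[\gamma]$ with the truncated $p$-adic logarithm $\Sigma_s(y)=\sum_{p\nmid j<p^s}y^j/j$ at $y=-\gamma$.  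Your approach is more conceptual and treats the three values of~$m$ uniformly until the very end; the paper's argument is more elementary but correspondingly more tangled, and its final verification of~(yps11) is done case by case.

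The one place where your plan is shaky is exactly the point you flag: the induction step for $\Sigma_s(-\gamma)\equiv C_s(1+\gamma^{\chi^s})\pmod{p^s}$.  Your description (``the residual range $p^{s-1}\le j<p^s$ plus a term of the form $(1/p)\Sigma_{s-1}(y^p)$'') does not parse as written: removing the multiples of~$p$ from $\sum_{j<p^s}y^j/j$ gives $\Sigma_s(y)$ plus $\tfrac1p\sum_{i<p^{s-1}}(y^p)^i/i$, and the latter is \emph{not} $\Sigma_{s-1}(y^p)$ because it still contains terms with $p\mid i$.  A correct induction has to iterate this splitting all the way down, or equivalently compare $(1-y)^{p^s}$ with $(1-y^p)^{p^{s-1}}$ to the right $p$-adic precision; either way one needs more than Lemmas~2.1 and~2.3 as stated.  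This is not a fatal gap---the computation can be carried out in $\Z_p[\gamma]$ using that $(-\gamma)^p\in\{-\gamma,-\delta\}$ so the recursion closes---but as it stands the hardest step of your proof is asserted rather than shown.
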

\begin{proof}
Let $m$ be an integer . We first assume that the following congruence is right.
\begin{align}
\label{jiankun4}
&\sum_{k=1}^{p^s-1}{}^{'}\frac{(-1)^{k}}{k}u_{p^\alpha n-k}(m-2,1)\\\notag
&\equiv\bigg(\frac{m(m-4)}{p}\bigg)^s\frac{(-m^{p^s-p^{s-1}}+1)}{2p^s}(u_{p^{\alpha-s}n}(m-2,1)+u_{p^{\alpha-s}n-1}(m-2,1))\pmod{p^{s}}.
\end{align}
Note that $u_{-k}(m-2,1)=-u_{k}(m-2,1)$, then
\begin{align*}
&\sum_{\floor {k/p^s}=l}{}^{'}\frac{(-1)^ku_{p^\alpha n-k}(m-2,1)}{k}={\sum_{k=1}^{p^s-1}{}^{'}\frac{(-1)^{p^sl+k}u_{p^\alpha n-p^sl-k}(m-2,1)}{p^sl+k}}\\
&\equiv(-1)^{p^sl}\sum_{k=1}^{p^s-1}{}^{'}\frac{(-1)^{k}u_{p^\alpha n+k-p^sl-p^s}(m-2,1)}{k}\pmod{p^s}\\
&=(-1)^{p^sl+1}\sum_{k=1}^{p^s-1}{}^{'}\frac{(-1)^{k}u_{p^s(l+1-p^{\alpha-s} n)-k}(m-2,1)}{k}.
\end{align*}
Here we take $s, (l+1-p^{\alpha-s} n)$ instead of $\alpha, n$ in (\ref{jiankun4}), then (\ref{jiankun5}) is done.
\begin{align*}
&\sum_{k=1}^{p^s-1}{}^{'}\frac{(-1)^{k}u_{p^s(l+1-p^{\alpha-s} n)-k}(m-2,1)}{k}\\
&\equiv\bigg(\frac{m(m-4)}{p}\bigg)^s\frac{(-m^{p^s-p^{s-1}}+1)}{2p^s}(u_{l+1-p^{\alpha-s} n}(m-2,1)+u_{l-p^{\alpha-s} n}(m-2,1))\pmod{p^{s}}
\end{align*}
Next we will prove (\ref{jiankun4}).
On the one hand, we split the sum into a sum with $p\nmid k$ and one with $k=lp$,
\begin{align}
\label{jiankun7}\sum_{k=1}^{p^s-1}{}^{'}\binom{p^s}{k}u_{p^\alpha n-k}(m-2,1)=\sum_{k=0}^{p^s}\binom{p^s}{k}u_{p^\alpha n-k}(m-2,1)-\sum_{l=0}^{p^{s-1}}\binom{p^s}{pl}u_{p^\alpha n-pl}(m-2,1),
\end{align}
with the help of Lemma \ref{lzhang}(i) and $u_{pl}(m-2,1)=(\frac{m(m-4)}{p})u_l(m-2,1)$, then
\begin{align}
\label{jiankun8}\sum_{l=0}^{p^{s-1}}\binom{p^s}{pl}u_{p^\alpha n-pl}(m-2,1)\equiv{\bigg(\frac{m(m-4)}{p}\bigg)\sum_{k=0}^{p^{s-1}}\binom{p^{s-1}}{k}u_{p^{\alpha-1} n-k}(m-2,1)}\pmod{p^{2s}}.\end{align}
On the other hand, with the help of Lemma \ref{lzhang}(iii), then we get
\begin{align}
\label{jiankun1}
&\notag\sum_{k=1}^{p^s-1}{}^{'}\binom{p^s}{k}u_{p^\alpha n-k}(m-2,1)\equiv\sum_{k=1}^{p^{s}-1}{}^{'}
\frac{p^s}{k}\binom{p^{s-1}-1}{{\floor {\frac{k-1}{p}}}}u_{p^\alpha n-k}(m-2,1)(-1)^{k-1-\floor {\frac{k-1}{p}}}\\
&=p^s\sum_{t=0}^{p^{s-1}-1}\binom{p^{s-1}-1}{t}(-1)^t\sum_{\floor {\frac{k}{p}}=t}{}^{'}\frac{(-1)^{k-1}}{k}u_{p^\alpha n-k}(m-2,1)\pmod{p^{2s}},
\end{align}
we may assume $s\geq 1$ in Lemma \ref{2zhang}. Clearly, we proceed by induction, that for $s=1,2,\ldots,r-1$, (\ref{jiankun5}) is right.
\begin{align}
\notag
&\frac{1}{p^s}\sum_{k=1}^{p^s-1}{}^{'}\binom{p^s}{k}u_{p^\alpha n-k}(m-2,1)\equiv \sum_{t=0}^{p^{s-1}-1}\binom{p^{s-1}-1}{t}(-1)^{t}\bigg(\sum_{\floor {\frac{k}{p}}=t}{}^{'}\frac{(-1)^{k-1}}{k}u_{p^{\alpha}n-k}(m-2,1)\\\notag
-&\bigg(\frac{m(m-4)}{p}\bigg)\frac{(-m^{p^\alpha-p^{\alpha-1}}+1)(-1)^{t}}{2p^\alpha}(u_{p^{\alpha-1}n-t}(m-2,1)+u_{p^{\alpha-1}n-t-1}(m-2,1))\bigg)\\\notag
+&\bigg(\frac{m(m-4)}{p}\bigg)\frac{(-m^{p^\alpha-p^{\alpha-1}}+1)}{2p^\alpha}\sum_{k=0}^{p^{s-1}}\binom{p^{s-1}}{k}u_{p^{\alpha-1}n-k}(m-2,1)\pmod{p^{s}}
\\\notag
\end{align}
We apply Lemma2.1 with $s-1, 1$ instead of $\alpha, n$, then 
\begin{align}
\label{jiankun6}
&\frac{1}{p^s}\sum_{k=1}^{p^s-1}{}^{'}\binom{p^s}{k}u_{p^\alpha n-k}(m-2,1)\equiv \sum_{n_1=0}^{p^{s-2}-1}\binom{p^{s-2}-1}{n_1}(-1)^{n_1}\bigg(\sum_{\floor {k/{p^2}}=n_1}{}^{'}\frac{(-1)^{k-1}}{k}u_{p^{\alpha}n-k}\\\notag
-&\sum_{t=0}^{p-1}\bigg(\frac{m(m-4)}{p}\bigg)\frac{(-m^{p^\alpha-p^{\alpha-1}}+1)(-1)^{pn_1+t}}{2p^\alpha}(u_{p^{\alpha-1}n-pn_1-t}(m-2,1)+u_{p^{\alpha-1}n-pn_1-t-1}(m-2,1))\bigg)\\\notag
+&\bigg(\frac{m(m-4)}{p}\bigg)\frac{(-m^{p^\alpha-p^{\alpha-1}}+1)}{2p^\alpha}\sum_{k=0}^{p^{s-1}}\binom{p^{s-1}}{k}u_{p^{\alpha-1}n-k}(m-2,1)\pmod{p^{s}},
\end{align}
here
\begin{align*}
&\sum_{t=0}^{p-1}(-1)^{pn_1+t}(u_{p^{\alpha-1}n-pn_1-t}(m-2,1)+u_{p^{\alpha-1}n-pn_1-t-1}(m-2,1))\\
=&(-1)^{n_1}(u_{p^{\alpha-1}n-pn_1}(m-2,1)+u_{p^{\alpha-1}n-pn_1-p}(m-2,1))\\
=&(-1)^{n_1}\bigg(\frac{m(m-4)}{p}\bigg)(u_{p^{\alpha-2}n-n_1}(m-2,1)+u_{p^{\alpha-2}n-n_1-1}(m-2,1)),
\end{align*}
Repeat this process $s-1$ times as (\ref{jiankun6}), then 
\begin{align}
\notag
&\frac{1}{p^s}\sum_{k=1}^{p^s-1}{}^{'}\binom{p^s}{k}u_{p^\alpha n-k}(m-2,1)\\\notag&\equiv\ldots\equiv
\sum_{n_1=0}^{p^{}-1}\binom{p^{}-1}{n_1}(-1)^{n_1}\bigg(\sum_{\floor {k/{p^{s-1}}}=n_1}{}^{'}\frac{(-1)^{k-1}}{k}u_{p^{\alpha}n-k}(m-2,1)\\\notag
-&\bigg(\frac{m(m-4)}{p}\bigg)^{s-1}\frac{(-m^{p^\alpha-p^{\alpha-1}}+1)(-1)^{n_1}}{2p^\alpha}(u_{p^{\alpha-s+1}n-n_1}(m-2,1)+u_{p^{\alpha-s+1}n-n_1-1}(m-2,1))\bigg)\\\notag
+&\bigg(\frac{m(m-4)}{p}\bigg)\frac{(-m^{p^\alpha-p^{\alpha-1}}+1)}{2p^\alpha} \sum_{k=0}^{p^{s-1}}\binom{p^{s-1}}{k}u_{p^{\alpha-1}n-k}(m-2,1)
\\\notag
\equiv&\sum_{k=1}^{p^{s}-1}{}^{'}\frac{(-1)^{k-1}}{k}u_{p^{\alpha}n-k}(m-2,1)-\bigg(\frac{m(m-4)}{p}\bigg)^s\frac{(-m^{p^\alpha-p^{\alpha-1}}+1)}{2p^\alpha}(u_{p^{\alpha-s}n}(m-2,1)\\\label{jiankun9}
&+u_{p^{\alpha-s}n-1}(m-2,1))+\bigg(\frac{m(m-4)}{p}\bigg)\frac{(-m^{p^\alpha-p^{\alpha-1}}+1)}{2p^\alpha}\sum_{k=0}^{p^{s-1}}\binom{p^{s-1}}{k}u_{p^{\alpha-1}n-k}(m-2,1)\pmod{p^{s}},
\end{align}
from (\ref{jiankun7}), (\ref{jiankun8}) and (\ref{jiankun9}) , then we only need to prove that 
\begin{align}
\notag
&\frac{1}{p^s}\sum_{k=0}^{p^s}\binom{p^s}{k}u_{p^\alpha n-k}(m-2,1)-\frac{1}{p^s}\bigg(\frac{m(m-4)}{p}\bigg)\sum_{k=0}^{p^{s-1}}\binom{p^{s-1}}{k}u_{p^{\alpha-1}n-k}(m-2,1)\equiv\\\label{yps11}
&\bigg(\frac{m(m-4)}{p}\bigg)\frac{(m^{p^\alpha-p^{\alpha-1}}-1)}{2p^\alpha}\sum_{k=0}^{p^{s-1}}\binom{p^{s-1}}{k}u_{p^{\alpha-1}n-k}(m-2,1)\pmod{p^{s}}.
\end{align}
Substitue $m=1$ in (\ref{yps11}).
Then  
\begin{align*}
&\sum_{k=0}^{p^s}\binom{p^s}{k}u_{p^\alpha n-k}(-1,1)-\bigg(\frac{-3}{p}\bigg)\sum_{k=0}^{p^{s-1}}\binom{p^{s-1}}{k}u_{p^{\alpha-1}n-k}(-1,1)\\\notag
&=(-1)^{p^s} u_{p^\alpha n+p^s}(-1,1)
-\bigg(\frac{-3}{p}\bigg)(-1)^{p^{s-1}} u_{p^{\alpha-1} n+p^{s-1}}(-1,1)=0\notag,
\end{align*}
we are done.
Because 
\begin{align}
\notag
&\sum_{k=0}^{p^{s-1}}\binom{p^{s-1}}{k}u_{k-p^{\alpha-1}n}(0,1)
=\frac{(1+i)^{p^{s-1}}i^{-p^{\alpha-1}n}-(1-i)^{p^{s-1}}(-i)^{-p^{\alpha-1}n}}{2i}\\ \label{yps2}
=&2^{\frac{p^{s-1}-1}{2}}(u_{\frac{p^{s-1}-(\frac{-1}{p})^{s-1}}{2}-p^{\alpha-1}n}(0,1)+(\frac{-1}{p})^{s-1}u_{\frac{p^{s-1}-(\frac{-1}{p})^{s-1}}{2}-p^{\alpha-1}n+1}(0,1))
\end{align}
Next we will take $m=2$ in (\ref{yps11}). By (\ref{yps2}), then it suffices to show that 
\begin{align}
\notag
&\frac{2^{p^\alpha-p^{\alpha-1}}-1}{2p^{\alpha-s}}\equiv
\bigg(\bigg(\frac{-1}{p}\bigg)2^{\frac{p^s-p^{s-1}}{2}}\times
\\&\frac{(u_{\frac{p^{s}-(\frac{-1}{p})^{s}}{2}-p^{\alpha}n}(0,1)+(\frac{-1}{p})^{s}u_{\frac{p^{s}-(\frac{-1}{p})^{s}}{2}-p^{\alpha}n+1}(0,1))}{(u_{\frac{p^{s-1}-(\frac{-1}{p})^{s-1}}{2}-p^{\alpha-1}n}(0,1)+(\frac{-1}{p})^{s-1}u_{\frac{p^{s-1}-(\frac{-1}{p})^{s-1}}{2}-p^{\alpha-1}n+1}(0,1))}    -1\bigg)\\\notag
&\equiv 2^{\frac{p^s-p^{s-1}}{2}}(-1)^{\frac{p-(\frac{-1}{p})}{4}}-1
\pmod{p^{s}},
\end{align}
here
\begin{align}
\notag
&(\frac{-1}{p})u_{\frac{p^{s-1}-(\frac{-1}{p})^{s-1}}{2}-p^{\alpha-1}n}(0,1)=u_{\frac{p^{s}-p(\frac{-1}{p})^{s-1}}{2}-p^{\alpha}n}(0,1)
=(-1)^{\frac{p-(\frac{-1}{p})}{4}}u_{\frac{p^{s}-(\frac{-1}{p})^{s}}{2}-p^{\alpha}n}(0,1)\\\notag.
\end{align}
and 
\begin{align}
\notag
&(\frac{-1}{p})u_{\frac{p^{s-1}-(\frac{-1}{p})^{s-1}}{2}-p^{\alpha-1}n+1}(0,1)=u_{\frac{p^{s}-p(\frac{-1}{p})^{s-1}}{2}-p^{\alpha}n+p}(0,1)
=(\frac{-1}{p})(-1)^{\frac{p-(\frac{-1}{p})}{4}}u_{\frac{p^{s}-(\frac{-1}{p})^{s}}{2}-p^{\alpha}n+1}(0,1)\\\notag,
\end{align}
At last, with the help of Lemma 2.3 and the following congruence
\begin{align}
\notag
&\frac{2^{p^s-p^{s-1}}-1}{2p^s}=\frac{1}{2p^s}[(2^{\frac{p^s-p^{s-1}}{2}}(-1)^{\frac{p-(\frac{-1}{p})}{4}}-1)^2+2(2^{\frac{p^s-p^{s-1}}{2}}(-1)^{\frac{p-(\frac{-1}{p})}{4}}-1)]\\
\equiv&\frac{1}{p^s}(2^{\frac{p^s-p^{s-1}}{2}}(-1)^{\frac{p-(\frac{-1}{p})}{4}}-1)
\pmod{p^{s}},
\end{align}
Lemma 2.4 with $m=2$ is concluded because
$$2^{\frac{p^s-p^{s-1}}{2}}\equiv\bigg(\frac{2}{p}\bigg)=(-1)^\frac{p^2-1}{8}=(-1)^{\frac{p-(\frac{-1}{p})}{4}}
\pmod{p^{s}}.$$
When  $m=3,$ (\ref{yps11}) can be proved similarly, with the help of Lemma 2.3, then we have 
\begin{align}
\notag
&\bigg(\frac{-3}{p}\bigg)\frac{(-3)^\frac{p^{s}-p^{s-1}}{2}}{p^{s}}\frac{(\frac{1+\sqrt{3}i}{2})^{p^{\alpha}n+p^{s}}+(\frac{1-\sqrt{3}i}{2})^{p^{\alpha}n+p^{s}}}{(\frac{1+\sqrt{3}i}{2})^{p^{\alpha-1}n+p^{s-1}}+(\frac{1-\sqrt{3}i}{2})^{p^{\alpha-1}n+p^{s-1}}}-\frac{1}{p^{s}}\\\label{jiankun10}
&\equiv\frac{\bigg(\frac{3}{p}\bigg)3^\frac{p^{s}-p^{s-1}}{2}-1}{p^s}\equiv\frac{3^{p^\alpha-p^{\alpha-1}}-1}{2p^\alpha}\pmod{p^{s}},
\end{align}
where
\begin{align*}
&\frac{(\frac{1+\sqrt{3}i}{2})^{p^{\alpha}n+p^{s}}+(\frac{1-\sqrt{3}i}{2})^{p^{\alpha}n+p^{s}}}{(\frac{1+\sqrt{3}i}{2})^{p^{\alpha-1}n+p^{s-1}}+(\frac{1-\sqrt{3}i}{2})^{p^{\alpha-1}n+p^{s-1}}}\\
=&\frac{(\frac{1+\sqrt{3}i}{2})^{(\frac{-3}{p})(p^{\alpha-1}n+p^{s-1})}+(\frac{1-\sqrt{3}i}{2})^{(\frac{-3}{p})(p^{\alpha-1}n+p^{s-1})}}{(-1)^{\frac{p-(\frac{-3}{p})}{3}}[(\frac{1+\sqrt{3}i}{2})^{p^{\alpha-1}n+p^{s-1}}+(\frac{1-\sqrt{3}i}{2})^{p^{\alpha-1}n+p^{s-1}}]}=1,
\end{align*}
(\ref{jiankun10}) is proved when $m=3$.
\end{proof}

\begin{Lem}
\label{3zhang}
Let $a_{k}\in Z_{p}(k=0,1, \ldots)$ be such that 
$$\sum_{\floor {k/p^s}=l}a_{k}\equiv 0\pmod{p^s},
$$for any nonnegative integer $m$ , $n$, $\alpha$ and $s$.
Then 
\begin{equation}\label{zhangyong}
\sum_{\floor {k/p^\alpha}=l}a_{k}\binom{mp^\alpha n-1}{k}(-1)^k\equiv 0\pmod{p^\alpha}.
\end{equation}
\end{Lem}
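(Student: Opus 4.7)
The plan is induction on $\alpha$. The base case $\alpha=0$ is the trivial congruence modulo $p^0=1$, so fix $\alpha\geq 1$ and suppose the statement is known at level $\alpha-1$ for every sequence satisfying the block-divisibility hypothesis. The first move is to apply Lemma~\ref{lzhang}(iii) with $n$ replaced by $mn$ (valid since $mp^\alpha n=p^\alpha(mn)$), which yields
$$\binom{mp^\alpha n-1}{k}(-1)^k\equiv \binom{mp^{\alpha-1}n-1}{\floor{k/p}}(-1)^{\floor{k/p}}\pmod{p^\alpha}.$$
Substituting this into the left-hand side of (\ref{zhangyong}), and writing $k=pq+r$ with $0\leq r<p$ (so that $q=\floor{k/p}$ and the outer constraint $\floor{k/p^\alpha}=l$ becomes $\floor{q/p^{\alpha-1}}=l$), the sum reduces, modulo $p^\alpha$, to
$$\sum_{\floor{q/p^{\alpha-1}}=l}\binom{mp^{\alpha-1}n-1}{q}(-1)^q\,A_q,\qquad A_q:=\sum_{r=0}^{p-1}a_{pq+r}=\sum_{\floor{k/p}=q}a_k.$$

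Applying the hypothesis at level $s=1$ gives $p\mid A_q$, so I would set $A_q=p\tilde a_q$ with $\tilde a_q\in\Z_p$, extracting a factor of $p$:
$$p\sum_{\floor{q/p^{\alpha-1}}=l}\tilde a_q\binom{mp^{\alpha-1}n-1}{q}(-1)^q.$$
This is exactly the conclusion of the lemma one level lower, with $(a_k)$ replaced by $(\tilde a_k)$. To close the induction it remains to verify that $(\tilde a_q)$ itself satisfies the block-divisibility hypothesis at every level $s\geq 0$; but
$$p\sum_{\floor{q/p^s}=l'}\tilde a_q=\sum_{\floor{q/p^s}=l'}A_q=\sum_{\floor{k/p^{s+1}}=l'}a_k\equiv 0\pmod{p^{s+1}}$$
by the original hypothesis at level $s+1$, so dividing by $p$ yields $\sum_{\floor{q/p^s}=l'}\tilde a_q\equiv 0\pmod{p^s}$, as required.

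The induction hypothesis then says that the inner sum is $\equiv 0\pmod{p^{\alpha-1}}$, and multiplication by the external factor $p$ delivers the desired congruence modulo $p^\alpha$. I do not anticipate a genuine obstacle; the only care required is in the index-range bookkeeping when passing between $k$ and $q=\floor{k/p}$, and in checking that the block-divisibility hypothesis descends cleanly from $(a_k)$ to $(\tilde a_q)$.
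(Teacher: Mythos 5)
Your proposal is correct and follows essentially the same route as the paper: induction on $\alpha$, reduction via Lemma~\ref{lzhang}(iii), grouping the terms by $q=\floor{k/p}$, extracting a factor of $p$ from the inner block sums, and applying the induction hypothesis to the quotient sequence. Your explicit verification that the block-divisibility hypothesis descends to $(\tilde a_q)$ at every level $s$ is in fact slightly more careful than the paper's write-up, which only records the level needed for the final step.
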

\begin{proof}
We prove it by induction on $\alpha$. The above congruence is trivial when $\alpha=0,1$.
Suppose that we have show it for $0,1,\ldots,\alpha-1$.
\begin{align}
\sum_{\floor {k/p^\alpha}=l}a_{k}\binom{mp^\alpha n-1}{k}(-1)^k&\equiv \sum_{\floor {k/p^\alpha}=l}a_{k}\binom{mp^{\alpha-1}n-1}{\floor{k/p}}(-1)^{\floor{k/p}}\\\notag
&=\sum_{\floor {t/p^{\alpha-1}}=l}(\sum_{\floor{k/p}=t}a_{k})\binom{mp^{\alpha-1}n-1}{t}(-1)^{t},
\end{align}
we now apply the induction hypothesis for $\alpha-1$ with the new coefficients
$$p\hat{a_{t}}=\sum_{\floor{k/p}=t}a_{k}\equiv{0}\pmod{p},$$
and $$\sum_{\floor {t/p^{\alpha-1}}=l}\hat{a_{t}}=\frac{1}{p}\sum_{\floor {k/p^{\alpha}}=l}a_{k}\equiv{0}\pmod{p^{\alpha-1}}.$$
So we obtain 
$$\sum_{\floor {k/p^\alpha}=l}a_{k}\binom{mp^\alpha n-1}{k}(-1)^k\equiv p\sum_{\floor {t/p^{\alpha-1}}=l}\hat{a_{t}}\binom{mp^{\alpha-1}n-1}{t}(-1)^{t}\equiv{0}\pmod{p^{\alpha}}.$$
\end{proof}
\section{Proofs of Theorem 1.1}
\setcounter{equation}{0}
\setcounter{Thm}{0}
\setcounter{Lem}{0}
\setcounter{Cor}{0}
\setcounter{Conj}{0}
\begin{proof}
According to Lemma2.1(i)  and  \ref{y5}, so (\ref{jiankun11}) can be rewritten as
\begin{align}
\notag
&n\sum_{k=1}^{p^\alpha n-1}{}^{'}\binom{2p^\alpha n-1}{k-1}\frac{u_{p^\alpha n-k}(m-2,1)}{k}\\\label{y9}
&\equiv\bigg(\frac{m(m-4)}{p}\bigg)\frac{(m^{(p^\alpha-p^{\alpha-1})n}-1)}{2p^\alpha}\sum_{k=0}^{p^{\alpha-1}n-1}\binom{2p^{{\alpha-1}}n}{k}u_{p^{\alpha-1} n-k}(m-2,1)\pmod{p^{\alpha}}.
\end{align}
\noindent{\rm (i)}When $m=1$, we need only to prove 
\begin{equation}\label{y10}
\sum_{k=1}^{p^\alpha n-1}{}^{'}\binom{2p^\alpha n-1}{k-1}\frac{u_{p^\alpha n-k}(-1,1)}{k}
\equiv{0}\pmod{p^{\alpha}}.
\end{equation}
However
\begin{align}\notag
&\sum_{k=1}^{p^\alpha n-1}{}^{'}\binom{2p^\alpha n-1}{k-1}\frac{u_{p^\alpha n-k}(-1,1)}{k}
\equiv\sum_{k=1}^{p^\alpha n-1}{}^{'}\binom{2p^\alpha n-1}{k}(-1)^k\frac{(-1)^{k-1}u_{p^\alpha n-k}(-1,1)}{k}\pmod{p^{\alpha}}.
\end{align}
We set $m=2$ and $a_{k}=\frac{(-1)^{k-1}u_{p^\alpha n-k}(-1,1)}{k}$ if $p\nmid k,$ $a_{k}=0$ otherwise in Lemma \ref{3zhang}. Thus (\ref{jiankun11}) with $m=1$ immediately follows from
Lemma \ref{2zhang}.

\noindent{\rm (ii)}
Next we will prove it when $m=2, 3.$
 It suffices to prove that
\begin{align*}
\notag
&n\sum_{k=1}^{p^\alpha n-1}{}^{'}\binom{2p^\alpha n-1}{k-1}\frac{u_{p^\alpha n-k}(m-2,1)}{k}
\equiv\bigg(\frac{m(m-4)}{p}\bigg)\frac{m^{(p^\alpha-p^{\alpha-1})n}-1}{2p^\alpha}\\\label{1y9}
&\sum_{k=0}^{p^{\alpha-1}n-1}\bigg(\binom{2p^{{\alpha-1}}n-1}{k}+\binom{2p^{{\alpha-1}}n-1}{k-1}\bigg)u_{p^{\alpha-1}n-k}(m-2,1)
\equiv n\bigg(\frac{m(m-4)}{p}\bigg)\\\notag
&\frac{-m^{p^\alpha-p^{\alpha-1}}+1}{2p^\alpha}\sum_{k=0}^{p^{\alpha-1}n-1}\binom{2p^{{\alpha-1}}n-1}{k}(u_{k-p^{\alpha-1}n}(m-2,1)+u_{k+1-p^{\alpha-1}n}(m-2,1))\pmod{p^{\alpha}},
\end{align*}
where
\begin{align*}
\frac{m^{(p^\alpha-p^{\alpha-1})n}-1}{2p^\alpha}=\frac{1}{2p^\alpha}\sum_{k=1}^{n}\binom{n}{k}(m^{p^\alpha-p^{\alpha-1}}-1)^k\equiv{n\frac{(m^{p^\alpha-p^{\alpha-1}}-1)}{2p^\alpha}}\pmod{p^{\alpha}}.
\end{align*}
By Lemma \ref{lzhang}, then
\begin{align*}
&\sum_{n_1=0}^{p^{\alpha-1}n-1}\binom{2p^{\alpha-1}n-1}{n_1}(-1)^{n_1}\bigg(\sum_{\floor{k/p}=n_1}{}^{'}\frac{(-1)^{k}u_{k-p^{\alpha}n}(m-2,1)}{k}-
\bigg(\frac{m(m-4)}{p}\bigg)\\
&\frac{(-m^{p^\alpha-p^{\alpha-1}}+1)(-1)^{n_1}}{2p^\alpha}
(u_{{n_1}-p^{\alpha-1}n}(m-2,1)+u_{{n_1}+1-p^{\alpha-1}n}(m-2,1))\bigg)\equiv{0}\pmod{p^{\alpha}}.
\end{align*}
With the help of Lemma 2.4 with $s=1$, we have 
\begin{align*}
&\sum_{n_{1}=0}^{p^{\alpha-2}n-1}\binom{2p^{\alpha-2}n-1}{n_{1}}(-1)^{n_{1}}\bigg(\sum_{\floor{k/p^2}=n_{1}}{}^{'}\frac{(-1)^{k}u_{k-p^{\alpha}n}(m-2,1)}{k}-
\bigg(\frac{m(m-4)}{p}\bigg)^2\\
&\frac{(-m^{p^\alpha-p^{\alpha-1}}+1)}{2p^\alpha}(-1)^{n_{1}}(u_{n_{1}-p^{\alpha-2}n}(m-2,1)+u_{n_{1}+1-p^{\alpha-2}n}(m-2,1))\equiv{0}\pmod{p^{\alpha}},
\end{align*}
then 
\begin{align*}
&\sum_{n_{1}=0}^{p^{\alpha-2}n-1}\binom{2p^{\alpha-2}n-1}{n_{1}}(-1)^{n_{1}}\bigg(\sum_{\floor{k/p^2}=n_{1}}{}^{'}\frac{(-1)^{k}u_{k-p^{\alpha}n}(m-2,1)}{k}-
\bigg(\frac{m(m-4)}{p}\bigg)^2\\
&\frac{(-m^{p^\alpha-p^{\alpha-1}}+1)(-1)^{n_{1}}}{2p^\alpha}(u_{n_{1}-p^{\alpha-2}n}(m-2,1)+u_{n_{1}+1-p^{\alpha-2}n}(m-2,1))\bigg)\equiv{0}\pmod{p^{\alpha}},
\end{align*}
repeat this process $\alpha$ times,  then we obtain
\begin{align*}
&\sum_{t=0}^{n-1}\binom{2n-1}{t}(-1)^t\bigg(\sum_{\floor{k/p^{\alpha}}=0}{}^{'}\frac{(-1)^{t+k}u_{k-p^{\alpha}(n-t)}(m-2,1)}{k}-
\bigg(\frac{m(m-4)}{p}\bigg)^{\alpha-1}\\
&\frac{(-m^{p^\alpha-p^{\alpha-1}}+1)(-1)^{pt}}{2p^\alpha}\sum_{s=0}^{p-1}(-1)^s(u_{pt+s-pn}(m-2,1)+u_{pt+s+1-pn}(m-2,1))\bigg)\\
&=\sum_{t=0}^{n-1}\binom{2n-1}{t}\bigg(\sum_{\floor{k/p^{\alpha}}=0}{}^{'}\frac{(-1)^{k}u_{k-p^{\alpha}(n-t)}(m-2,1)}{k}-
\bigg(\frac{m(m-4)}{p}\bigg)^{\alpha}\\
&\frac{(-m^{p^\alpha-p^{\alpha-1}}+1)}{2p^\alpha}(u_{t-n}(m-2,1)+u_{t+1-n}(m-2,1))\bigg)\equiv0\pmod{p^{\alpha}},
\end{align*}
where the last step we used Lemma 2.4 with $l=0$.
\end{proof}

\begin{Ack}
We are grateful to Professor Zhi-Wei Sun for his helpful discussions on this paper.
\end{Ack}


\begin{thebibliography}{99}



\bibitem{AZ} M. Apagodu and D. Zeilberger, \textit{ Using the “Freshmans Dream” to prove combinatorial congruences}, arXiv:1606.03351v2.  

\bibitem{ASD71} A. O. L. Atkin and H. P. F. Swinnerton-Dyer, {\it Modular forms on noncongruence
subgroups}, Combinatorics (Proc. Sympos. Pure Math., Vol. XIX, Univ. California,
Los Angeles, Calif., 1968), Amer. Math. Soc., Providence, R.I., (1971), 1–25.

\bibitem{FB} F. Beukers, \textit{Some congruences for the Apéry numbers}, J. Number
Theory {\bf 25}(1985), 141–155. 

\bibitem{CoHa91} M. J. Coster and L. Van Hamme, {\it Supercongruences of Atkin and Swinnerton-Dyer type for Legendre polynomials}, J. Number Theory {\bf 38} (1991), 265-286.

\bibitem{Be85} F. Beukers, {\it Some congruences for the Ap\'ery numbers}, J. Number Theory, {\bf 21}(1985), 141-155.

\bibitem{LiLo14} W.-C. Li and L. Long, {\it Atkin and Swinnerton-Dyer congruences and noncongruence modular forms}, Algebraic number theory and related topics 2012, 269-299, RIMS K\^oky\^uroku Bessatsu, B51, Res. Inst. Math. Sci. (RIMS), Kyoto, 2014.

\bibitem{L}  J.-C. Liu, \textit{Congruences for truncated hypergeometric series ${}_{2}F_1$}, Bulletin of the Australian Mathematical Society, {\bf 96}(2017), 14–23.

\bibitem{OSS16} R. Osburn, B. Sahu and A. Straub, {\it Supercongruences for sporadic sequences}, Proc. Edinb. Math. Soc., {\bf 59}(2016), 503-518.

\bibitem{Sc85} A. J. Scholl, {\it Modular forms and de Rham cohomology: Atkin-Swinnerton-Dyer congruences}, Invent. Math., {\bf 79}(1985), 49–77.

\bibitem{St14} A. Straub, {\it Multivariate Ap\'ery numbers and supercongruences of rational functions}, Algebra Number Theory, {\bf 8}(2014), 1985-2007.

\bibitem{ST1} Z. W. Sun, \textit{Binomial coefficients, Catalan numbers and Lucas quotients},
Sci. China Math., {\bf 53}(2010), 2473–2488.

\bibitem{ZWS1} Z. W. Sun, \textit{p-adic valuations of some sums of multinomial coefficients}, Acta Arith., {\bf 148}(2011), 63-76.

\bibitem{ZWS2} Z. W. Sun, \textit{Supercongruences involving Lucas sequences}, preprint, 2016, arXiv:1610.03384.

\bibitem{ST2} Z. W. Sun and R. Tauraso, \textit{New congruences for central binomial coefficients},
Adv. in Appl. Math., {\bf 45}(2010), 125-148.
\end{thebibliography}
\end{document}